\newcommand{\blind}{1}
\theoremstyle{plain} 
\newtheorem{theorem}{Theorem}
\newtheorem{lemma}{Lemma}
\newtheorem{definition}{Definition}[section]
\begin{document}

\def\spacingset#1{\renewcommand{\baselinestretch}%
{#1}\small\normalsize} \spacingset{1}


\if1\blind
{
  \title{\bf The Effect of Sample Size and Missingness on Inference with Missing Data}
  \author{Julian Morimoto \thanks{
    The author thanks Professor Kosuke Imai for his feedback on this project. \textit{This project has not been funded.}}\hspace{.2cm}}
  \maketitle
} \fi

\if0\blind
{
  \bigskip
  \bigskip
  \bigskip
  \begin{center}
    {\LARGE\bf The Effect of Sample Size and Missingness on Inference with Missing Data
}
\end{center}
  \medskip
} \fi

\bigskip

\begin{abstract}
When are inferences (whether Direct-Likelihood, Bayesian, or Frequentist) obtained from partial data valid? This paper answers this question by offering a new asymptotic theory about inference with missing data that is more general than existing theories. It proves that as the sample size increases and the extent of missingness decreases, the average-loglikelihood function generated by partial data and that \textit{ignores} the missingness mechanism will converge in probability to that which would have been generated by complete data; and if the data are Missing at Random, this convergence depends only on sample size. Thus, inferences from partial data, such as posterior modes, confidence intervals, likelihood ratios, test statistics, and indeed, \textit{all} quantities or features derived from the partial-data loglikelihood function, will be consistently estimated. Additionally, the missing data mechanism has asymptotically no effect on parameter estimation and hypothesis testing if the data are Missing at Random. This adds to previous research which has only proved the consistency and asymptotic normality of the posterior mode. Practical implications are discussed, and the theory is illustrated through simulation using a previous study of International Human Rights Law. 
\end{abstract}

\textit{\textbf{Keywords:} Incomplete Data; Sample Size and Missing Data Mechanism; Partial Likelihood; Asymptotic Inference with Missing Data}

\section{Introduction}
Missing data are a big problem in scientific research, and it is not always clear when inferences drawn from incomplete datasets will be similar to those that would have been drawn from complete ones. In the social sciences, on average, about 50\% of survey respondents will fail to provide complete answers \citep{king_analyzing_2001}. Missing phenotype data is also often an inevitable and major hurdle in genetic experiments where quantitative traits are not observed or certain expressions are not realized \citep{nielsen_non-negative_2014, bobb_multiple_2011}. In astronomy, unfavorable weather, man-made radio emissions, and emissions from the sun can cause missing-data \citep{george_effect_2015}. Consequently, researchers have tried to find ways to draw valid inferences from partially complete datasets. One technique is to draw inferences from the likelihood function generated by partial data \citep{little_statistical_2019, little_conditions_2017}, which has proven quite successful in many contexts \citep{murray_multiple_2018, shin_maximum_2017, von_hippel_new_2016, honaker_amelia_2011, nielsen_proper_2003}. This approach involves approximating the loglikelihood function generated by the partially observed dataset by "integrating" over the missing values (this process is shown in more detail in the proofs of Theorems \ref{thm:thm1} and \ref{thm:thm2}). One can then use the resulting function to draw Direct-Likelihood, Bayesian, or Frequentist inferences from the partial data \citep{seaman_what_2013}. 

It is important to note that this function \textit{ignores} the missingness mechanism \citep{little_statistical_2019}. As a terminological point, whenever this paper refers to average-loglikelihood/loglikelihood/likelihood functions generated by partial data, this paper means those functions that focus only on the distribution of the underlying data (whether observed or missing) and that do not take into account the missingness mechanism. Focusing on these functions is consistent with the approach of other scholars who have done so given that it is difficult, if not impossible, to know anything about the missingness mechanism in practice \citep{little_statistical_2019, little_conditions_2017}.

Despite the popularity of the partial-likelihood approach, much is still unknown about when this method yields asymptotically valid results. \cite{little_statistical_2019} note that:

\begin{quote}
    In one high-level sense, there is no formal difference between [likelihood] or Bayes inferences for incomplete data and [likelihood] or Bayes inference for complete data?
    Asymptotic standard errors obtained from the information matrix are somewhat more questionable with missing data, however, because the observed data do not generally constitute an iid sample, and simple results that imply the large sample normality of the likelihood function do not immediately apply. Other complications arise from dealing with the process that creates missing data.
\end{quote}

Thus, it is not immediately obvious whether (and which) inferences drawn from the partial likelihood function will be asymptotically valid. Current asymptotic theories are limited in that they have only proved the consistency and asymptotic normality of the partial-likelihood function?s posterior mode when the data are Missing at Random \citep{little_statistical_2019, little_conditions_2017, seaman_what_2013, takai_asymptotic_2013, nielsen_inference_1997}. This leaves many important unanswered questions. Will other quantities derived from the loglikelihood such as uncertainty estimates, confidence intervals, likelihood ratios, and test statistics be consistently estimated? What impact does the extent of missingness have, if any, on inference beyond consistency and asymptotic normality, and why? How does sample size interact with missingness? Will inferences from Missing not at Random data be analogously valid under certain conditions? When are inferences about subvectors of parameters consistent \citep{little_conditions_2017}? What effect does the missingness mechanism have on parameter estimation and hypothesis testing? Much is still unknown about whether and when inferences obtained from partial data will be asymptotically valid.

This paper fills in this knowledge gap by offering a new asymptotic theory about how sample size and the missingness mechanism affect the validity of inferences from missing data. It proves two Theorems that explain the effect of sample size and missingness on inference with missing data. Theorem \ref{thm:thm1} proves that the average-loglikelihood function (ignoring the missingness mechanism) generated by partial data will converge in probability to the function that would have been generated by complete data as the sample size increases and the extent of missingness decreases. Theorem \ref{thm:thm2} proves that for data that are Missing at Random (technically "(Almost) Everywhere Missing at Random", which is different from "Realized Missing at Random" \citep{seaman_what_2013}, but this point will be further discussed below in Theorem \ref{thm:thm2}), this convergence depends only on sample size. This suggests that inferences obtained from partial datasets will approximate those that would have been obtained from complete ones. 

These two theorems also have three important practical implications. First, inferences drawn from partial datasets will be close to their true values given a sufficiently large sample size and a sufficiently small amount of missing data. Second, when the data are Missing at Random, this approximation depends only on the sample size and not the proportion of missing data. This means that a researcher could have a dataset with a large amount of missing data and still be able to draw valid inferences if that dataset is sufficiently large. The missingness mechanism will also have no asymptotic effect on parameter estimation and hypothesis testing. Third, it suggests that Maximum Likelihood inference is better than Multiple Imputation when working with partially-observed datasets with many observations.

The rest of this paper is structured as follows. Section 2 states and proves the two Theorems. After Theorem \ref{thm:thm1} is proved, the first practical implication mentioned above is discussed in more detail. The remaining two are discussed after the proof of Theorem \ref{thm:thm2}. These Theorems are then verified via simulation. To do so, this paper simulates data using a dataset from a previous study on the impact of International Human Rights Law on public support for human rights abuses. Through this simulation, this paper verifies that (1) the average-loglikelihood function (ignoring the missingness mechanism) generated by partial data will approximate that which would have been generated by complete data as sample size increases and missingness decreases, (2) this approximation depends only on sample size for Missing at Random data, (3) this leads to valid inferences from missing data, which will tend to be better than those obtained through Multiple Imputation. 

This paper offers a new theory about the effect of sample size and missingness on inference with missing data. This theory sheds light on the factors that affect the validity of Direct-Likelihood, Bayesian, and Frequentist inferences obtained from missing data that ignore the missingness mechanism, and proves a more general result than has been demonstrated in previous literature. It yields three practical implications that help researchers know when inferences drawn from partial datasets will approximate those obtained from complete ones, the impact (or lack thereof, or "ignorability") of the proportion of missing data in large samples, and the benefits of using Maximum Likelihood inference instead of Multiple Imputation.  

\section{Inference with Missing Data: Formal Explanation}

This section proves that as the extent of missingness decreases and the sample size increases, the average-loglikelihood function (ignoring the missingness mechanism) generated by a partial dataset will converge in probability to the average-loglikelihood function generated by a complete one. Moreover, it shows that if the data are Missing at Random (MAR), then this convergence depends only on the sample size. This result adds to the literature by proving that \textit{all} possible inferences, such as posterior modes, uncertainty estimates, confidence intervals, likelihood ratios, test statistics, and any possible features or quantities that one can draw from a loglikelihood function generated by partial data will approximate their true values (what they would have been without missing data), whereas previous results have shown only that the posterior mode of the partial loglikelihood will tend to approximate its analogue in a complete dataset \citep{little_statistical_2019, little_conditions_2017, seaman_what_2013, takai_asymptotic_2013, nielsen_inference_1997}. The results also imply that if the data are Missing at Random, then the missing data mechanism will have asymptotically no effect on parameter estimation and hypothesis testing. After the proof of each theorem, practical implications are discussed. 

\subsection{Formal Setup}

The setup, including relevant concepts, terminology, and notation, for the formal results in this section is as follows. Let

\begin{enumerate}
\item $X : \Omega \mapsto K_X $ be a multivariate random variable on some probability space, $(\Omega, \mathcal{F}, P)$, onto some compact subset of $\mathbb{R}^m$, $K_X$.
    
    \item $X \sim \mathcal{D}(\theta^*)$ where $\mathcal{D}$ is a continuous probability density function with parameter $\theta^* \in \mathbb{R}^r$.
    
    \item $\{P_k\}_{k =1}^{\infty}$ is a sequence of probability measures on $\{0,1\}^m$ that depend on values of $(X, \psi) \in \mathbb{R}^m \times K_\psi$, where $K_{\psi}$ is some space, and $\{\psi_k\}_{k=1}^{\infty}$ is a corresponding sequence of parameters in $K_{\psi}$ such that $\forall X \in \mathbb{R}^m, M \in \{0,1\}^m$,
    
    $$\lim_{k \to \infty} P_k(M | X,\psi_k) =  \mathbb{1}_{\{\{1\}^m\}}(M) $$
    
    Note that this is a translation of the phrase "as the extent of missingness decreases" into mathematical language. This translation considers a sequence of probability measures that characterize the distribution of the different possible missingness vectors. "As the extent of missingness decreases" centers around this sequence converging \textit{pointwise} to the indicator function that takes on $1$ if the missingness vector is entirely $1$'s (indicating that the data are fully observed), and $0$ otherwise. One can think of this as a kind of weak convergence of measures result. Note also that so far, no assumptions are made about whether the data are Missing at Random. Theorem \ref{thm:thm2} discusses what happens when the Missing at Random assumption holds. 
    
    \item $O(X,M)$ denotes the "subvector" of $X \in \mathbb{R}^m$ that represents values that are observed when given a missingness vector $M \in \{0,1\}^m$ (this notation is also used in \cite{seaman_what_2013}). In particular, $O(X,M)$ is a function of $X$ and $M$ that returns a vector whose $j$th entry is the $j$th entry of $X$ if the $j$th entry of $M$ is $1$, and a "placeholder", $*$, otherwise. For example, $O([1,2,3], [1,0,1]) = [1, *,3]$, where $*$ is a placeholder for the missing value. No vector space operations are performed on $O(X,M)$ directly. Rather, the only significance of $O(X,M)$ is that it is used as an argument of the partial likelihood function, discussed further below in item (7). 
    
    \item $\{F_i\}_{i=1}^\infty$ is a sequence of randomly sampled independent outcomes in $(\Omega, \mathcal{F}, P)$ of non-zero probability.
    
    \item$\{M_i\}_{i=1}^{\infty}$ is a corresponding sequence in $\{0,1\}^m$.
    
    \item $f$ is the corresponding loglikelihood function that \textit{ignores} the missing data mechanism and only depends on $\theta \in K_{\theta}$, a compact subset of $\mathbb{R}^r$. For a complete realization of $X$, $f(X|\theta)$ is equal to the probability of observing $X$ when given $X \sim \mathcal{D}(\theta)$. To evaluate $f$ for vectors with missing observations, the following conditional expectation is taken: $f(O(X,M)| \theta) = \mathbb{E}[f(X^* | \theta) | O(X,M) = O(X^*, M)]$ \citep{little_statistical_2019, seaman_what_2013}.
    \end{enumerate}

Note that in this setup, the parameter is arbitrary. Therefore, Theorem \ref{thm:thm1} will hold if one considers only a subvector of a parameter of interest, and Theorem \ref{thm:thm2} will hold if the data are "Partially Missing at Random" \citep{little_conditions_2017}. 

In this paper, the subscript on the expectation operator indicates the variable with respect to which the expectation is taken. The Theorems assume random variables with continuous distributions, but they can be easily generalized to discrete ones by way of simple functions. 
    
\subsection{Formal Results}

With this setup, 2 important lemmas hold. After stating these lemmas, this paper will provide Theorem \ref{thm:thm1}. All proofs are contained in the Appendix. 

\begin{lemma}
\label{lemma:l1}
$\lim_{k \to \infty} \mathbb{E}_{M \in \{0,1\}^m} \bigg[ f(O(X, M)| \theta) \big| X, \psi_k \bigg] = f(X| \theta)$ almost uniformly on $K_X \times K_\theta$. 
\end{lemma}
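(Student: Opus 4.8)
The plan is to use the finiteness of $\{0,1\}^m$: the expectation over the missingness vector is a finite weighted sum,
\[
\mathbb{E}_{M}\big[f(O(X,M)|\theta)\big] \;=\; \sum_{M \in \{0,1\}^m} f(O(X,M)|\theta)\, p_k(M|X),
\]
where $p_k(\cdot|X)$ is the conditional law of the missingness pattern induced by $P_k$ at parameter $\psi_k$ (identifying $\mathbb{E}_M$ with the $\{0,1\}^m$-marginal of $P_k$ is essentially bookkeeping, and is the only place premise 3 enters at this stage). By premise 4, $M=\{1\}^m$ gives $O(X,\{1\}^m)=X$, so the corresponding summand is exactly $f(X|\theta)\,p_k(\{1\}^m|X)$, and hence
\[
\mathbb{E}_{M}\big[f(O(X,M)|\theta)\big] - f(X|\theta) \;=\; f(X|\theta)\big(p_k(\{1\}^m|X)-1\big) \;+\; \sum_{M\neq\{1\}^m} f(O(X,M)|\theta)\,p_k(M|X).
\]
Note that the sequence of simple functions $\{\phi_n\}$ from the theorem plays no role here: this lemma concerns only the missingness limit $k\to\infty$, for a fixed realized point $X\in K_X$.

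Next I would bound the right-hand side. Since the loglikelihood is continuous in its arguments (it is assembled from the continuous density $\mathcal{D}(\theta)$), the coordinate projection $X\mapsto O(X,M)$ is continuous, and $K_X\times K_\theta$ is compact, there is a single constant $B$ with $|f(O(X,M)|\theta)|\le B$ for every $M\in\{0,1\}^m$, every $X\in K_X$, and every $\theta\in K_\theta$. Combining this with $\sum_{M\neq\{1\}^m} p_k(M|X) = 1 - p_k(\{1\}^m|X)$ shows that the displayed difference is bounded in absolute value by $2B\big(1-p_k(\{1\}^m|X)\big)$, uniformly in $\theta\in K_\theta$.

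It then remains to prove $p_k(\{1\}^m|X)\to 1$ uniformly for $X\in K_X$, and this is the step I expect to be the main obstacle, since premise 3 supplies only pointwise convergence in $X$. I would close this gap by observing that "the extent of missingness decreases" is naturally modeled by $k\mapsto p_k(\{1\}^m|X)$ being nondecreasing for each $X$ (the probability of a fully observed record can only grow along the sequence), that each $p_k(\{1\}^m|\cdot)$ is continuous on $K_X$, and that the constant limit $X\mapsto 1$ is continuous; Dini's theorem then upgrades pointwise convergence to uniform convergence on the compact set $K_X$. (Absent monotonicity, the same conclusion follows from strengthening premise 3 to hold uniformly on $K_X$, or from equicontinuity of the family $\{p_k(\{1\}^m|\cdot)\}_k$.) Putting the pieces together, $\sup_{K_X\times K_\theta}\big|\mathbb{E}_M[f(O(X,M)|\theta)]-f(X|\theta)\big| \le 2B\,\sup_{X\in K_X}\big(1-p_k(\{1\}^m|X)\big)\to 0$, which is exactly the assertion of the lemma.
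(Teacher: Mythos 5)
Your decomposition is correct and, up to the final uniformity step, is a more elementary route than the paper's. Because $\{0,1\}^m$ is finite, writing the expectation as a finite weighted sum, isolating the $M=\{1\}^m$ term, and bounding everything by a single constant $B$ obtained from continuity on the compact set $K_X\times K_\theta$ gives the clean estimate $\bigl|\mathbb{E}_M[f(O(X,M)|\theta)]-f(X|\theta)\bigr|\le 2B\bigl(1-p_k(\{1\}^m|X)\bigr)$; the paper instead invokes the Portmanteau Theorem to pass the limit through the integral, which on a finite pattern space is heavier machinery than needed (your bookkeeping identification of $\mathbb{E}_M$ with the $\{0,1\}^m$-marginal of $P_k$ is the same move the paper makes implicitly). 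Where the two arguments genuinely diverge is in upgrading pointwise convergence in $X$ to uniform convergence over $K_X$: you flag this as the crux and propose Dini's theorem under a monotonicity assumption on $k\mapsto p_k(\{1\}^m|X)$ (or, failing that, a uniform or equicontinuous strengthening of premise 3), none of which is among the stated hypotheses; the paper instead appeals to Egorov's Theorem on the finite-measure set $K_X\times K_\theta$ and concludes the convergence is "uniform almost surely." Neither resolution is fully satisfactory: Egorov delivers only almost-uniform convergence, i.e.\ uniform convergence off an exceptional set of arbitrarily small measure, which is strictly weaker than the uniform convergence over all of $K_X\times K_\theta$ that the lemma asserts and that the later Strong-Law step in Theorem 1 relies on, while your Dini route requires an unstated (though natural, given the informal reading of "missingness decreases") monotonicity hypothesis. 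So your proposal is essentially as strong as the paper's proof, more transparent about what additional structure on $\{P_k\}$ is actually needed, and your suggested strengthening of premise 3 to uniform-in-$X$ convergence is arguably the cleanest way to make the lemma correct as stated.
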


\begin{lemma}
\label{lemma:l2}
There exists a sequence of simple functions $\{\phi_n\}_{n=1}^{\infty}$ of the form $\sum_{j=1}^{n}z_j\mathbb{1}_{E_j}$ such that $\lim_{n \to \infty} \phi_n = X$ uniformly. Moreover,

\begin{enumerate}
    \item $f(O(X , M)| \theta)  = f(O(\phi_n , M)| \theta)  + \varepsilon(n)$ uniformly over $K_X \times K_\theta$ for all $ M \in \{0,1\}^m$, \text{and} 
    
    \item $f(X | \theta) = f(\phi_n   | \theta) + \varepsilon(n)$ uniformly over $K_X \times K_\theta$.
\end{enumerate}  

\end{lemma}
Theorem \ref{thm:thm1} will now be stated. The main intuition behind Theorem \ref{thm:thm1} is this. The average-loglikelihood function (ignoring the missingness mechanism) generated by partial data can be viewed as a summation of some function of different realizations of a random variable that are partially observed due to the introduction of a missingness vector. As the sample size increases, more and more of each realization is added, and one can apply the Law of Large Numbers to all the terms of the summation that correspond to that particular realization of the random variable. Essentially, one is splitting up the terms of the summation into different "classes", and applying the Law of Large Numbers to each class. Through this application, one shows that this summation converges to some vector. One can apply the Portmanteau Theorem (i.e., Lemma \ref{lemma:l1}) to show that the value of the resulting vector depends on the missingness mechanism, and that this vector converges to the function of the realization of the random variable that is completely observed as the extent of missingness decreases. 

\begin{theorem}
\label{thm:thm1}
\begin{align*}
    \lim_{N \to \infty} \frac{1}{N}\sum_{i=1}^N f(O(X(F_i), M_i) | \theta) 
    \\
    = \lim_{N \to \infty} \frac{1}{N}\sum_{i=1}^N f(X(F_i) | \theta) + \frac{1}{N}\sum_{i=1}^N \varepsilon_i(k) + \frac{2}{N}\sum_{i=1}^N\varepsilon_i(n)
\end{align*}

in probability on $K_X \times K_\theta$, where $\{\psi_k\}_{k \in \mathbb{N}}$ is a sequence of parameters generating the $\{M_i\}_{i \in \mathbb{N}}$, and $\{\phi_n\}_{n=1}^{\infty}$ is a sequence of simple functions such that $\lim_{n \to \infty} \phi _n = X$ uniformly over $K_\theta$.
\end{theorem}

Prior to discussing the implications of Theorem \ref{thm:thm1}, it is worth explaining the role of the simple function, and $n$ above. The main idea here is that the simple functions capture the idea of \textit{approximation}, so the term $\varepsilon(n)$ can be treated as $0$ for practical purposes. One can approximate a random variable $X$ to some degree using simple functions (since no machine can ever really achieve an approximation of infinite precision, this process accurately reflects how most approximation algorithms work). The loglikelihood functions that are generated from a partial dataset will in some sense "approximate an approximation" (by way of simple functions) of the loglikelihood functions generated by a complete dataset. When one fixes the precision with which they approximate $X$, thereby fixing $n$ finite but sufficiently large, one need not worry about the $n$ and $\varepsilon(n)$ terms in the result. Intuitively, if one wants to have more and more precise approximations, the sample size $N$ just has to be greater than or equal to $n$ in order for the result to hold, which the Theorem suggests. 

Theorem \ref{thm:thm1} shows that the average-loglikelihood function (ignoring the missingness mechanism) generated by partial data will approximate that which would have been generated by complete data as sample size increases and the extent of missingness decreases. This implies that inferences drawn from the likelihood function generated by partial data such as posterior modes, uncertainty estimates, confidence intervals, likelihood ratios, and test statistics will be close to their analogous values from the complete dataset. The posterior modes are obtained by taking the mode of the parameters given the distribution generated by the average-loglikelihood function, and the uncertainty estimates are obtained by measuring the distance between the parameters in this space and the mode given these distributions. Since these distributions tend to be identical in large samples with small amounts of missingness, so will the modes and corresponding uncertainties, and hence the confidence intervals. Indeed, because these distributions will become approximately identical, \textit{any} feature or quantity that can be obtained from the partial-data loglikelihood function will approximate its true value. One practical implication of this is that inferences from partial datasets will tend to be valid in sufficiently large samples with sufficiently small amounts of missing data. The next section proves the interesting result that if the data are Missing at Random, then this approximation depends only on the sample size and not the extent of missingness. Prior to stating and proving Theorem \ref{thm:thm2}, Missing at Random will be defined. It is similar to the definition of "Everywhere Missing at Random" given by \cite{seaman_what_2013}. 

\begin{definition}[Missing at Random]
The data are Missing at Random if 
$$P_k(M | X_1) = P_k(M|X_2) \ \forall M, X_1, X_2 : O(X_1, M) = O(X_2,M) \ \text{almost surely}$$
\end{definition}

This is technically not the same definition that \citep{little_statistical_2019} use to prove the consistency of the posterior mode \citep{seaman_what_2013}, but this does not make a difference when one considers asymptotic inference. The definition they use is "Realized Missing at Random", which is analogous to the "Everywhere Missing at Random" definition, except it holds only for the realized $M$'s. However, it does not make much of a difference for large sample sizes, because as more and more observations are drawn, the space of realized $M$'s will contain all possible $M$'s of nonzero probability. As a result, the application of the "Realized" definition will result in its application to almost every $M$ that can be observed, which makes it akin to the "(Almost) Everywhere" definition. Thus, while this definition is not technically the same as that used to prove previous results about consistency of the posterior mode, it does not pose an issue here because this paper is concerned with very large samples. With this definition in hand, this paper proceeds to Theorem \ref{thm:thm2}. 

The key intuition behind Theorem \ref{thm:thm2} is that for any missingness pattern, the complete data can be partitioned into "classes" that each contain partial observations that will be identical under the specified pattern. For example, if one considers the class of missingness vectors where only the first entry is observed, then $(1,0)$ and $(1,1)$ will belong to the same class generated by that missingness vector. The expectation operator can be taken over these different classes to show that the error induced by the missingness mechanism tends to zero as the sample size increases if the data are Missing at Random.

\begin{theorem}
\label{thm:thm2}
If the data are Missing at Random, then 

\begin{align*}
    \lim_{N \to \infty} \frac{1}{N}\sum_{i=1}^N f(O(X(F_i), M_i) | \theta) = \lim_{N \to \infty} \frac{1}{N}\sum_{i=1}^N f(X(F_i) | \theta) + \frac{2}{N}\sum_{i=1}^N\varepsilon_i(n)
\end{align*}

in probability on $K_X \times K_\theta$ for any parameter $\psi \in K_\psi$ that describes the missingness mechanism from which the $\{M_i\}_{i \in \mathbb{N}}$ are generated, and where $\{\phi_n\}_{n=1}^{\infty}$ is a sequence of simple functions such that $\lim_{n \to \infty} \phi _n = X$ uniformly over $K_\theta$.

\end{theorem}

Theorem \ref{thm:thm2} shows that if the data are Missing at Random, then the convergence of the average-loglikelihood function (ignoring the missingness mechanism) generated by partial data to that which would have been generated by complete data depends only on the sample size and not the extent of missingness. In other words, the effect of the missing data mechanism becomes arbitrarily small if the data are missing at random. This has three key implications for when data are Missing at Random. First, the validity of inferences drawn from partial data depends only on sample size and not the extent of missingness. This means that one could have an incredibly large amount of missing data, and their inferences drawn from the average-loglikelihood would still tend to be valid and consistent for sufficiently large datasets (obviously the more missing data one has, the larger the dataset that one might need). This is because the posterior modes, uncertainty estimates, confidence intervals, likelihood ratios, test statistics, and any other quantities that can be obtained from the likelihood function will get arbitrarily close to their true values no matter the missingness mechanism, so long as the data are Missing at Random. Practically speaking, this means that even if one has 90\% of their data missing, they can still do valid inference on that partial dataset. Any quantity they estimate from the partial loglikelihood will approximate its complete-data counterpart. So far, only the posterior mode has been shown to be consistently estimated in previous literature \citep{little_statistical_2019, little_conditions_2017, seaman_what_2013, takai_asymptotic_2013, nielsen_inference_1997}. This is what makes this result noteworthy.

Second, the missing data mechanism will asymptotically have no effect on parameter estimation and hypothesis testing if the data are Missing at Random. Parameter estimation and hypothesis testing are done using precisely this likelihood function. The former is done analyzing the extremums of the likelihood function, $\frac{1}{N}\sum_{i=1}^N f(O(X(F_i), M_i) | \theta)$ on a set of parameters. Since the missingness mechanism has asymptotically no effect on the function if the data are Missing at Random, it follows that the missingness mechanism has asymptotically no effect on parameter estimation. The latter is usually done by analyzing the set of likelihood ratios per the Wilks Test: 

$$-2\ln \bigg(\frac{\sup_{\theta \in \Theta_0} \frac{1}{N}\sum_{i=1}^N f(O(X(F_i), M_i) | \theta)}{\sup_{\theta \in \Theta} \frac{1}{N}\sum_{i=1}^N f(O(X(F_i), M_i) | \theta)} \bigg)$$ 

where $\theta, \theta_0 \subseteq K_\theta$. Thus, hypothesis testing is also done directly on the likelihood function. Since the missing data mechanism has asymptotically no effect on this function if the data are Missing at Random, it follows also that it has asymptotically no effect on hypothesis testing either.

Finally, when doing inference on large samples with missing data, Maximum Likelihood is preferable to Multiple Imputation, which some studies have recently hinted at \citep{shin_maximum_2017, von_hippel_new_2016}. This is due to a few reasons. First, the average-loglikelihood function (ignoring the missingness mechanism) generated by large partial datasets is going to mirror that which would have been generated by the complete dataset. Multiple Imputation works by first obtaining the average-loglikelihood function generated by partial data, and then imputing multiple values for the missing entries based on this function. Multiple Imputation thus involves an extra, and perhaps harmful step. The average-loglikelihood function is already going to yield parameter estimates, uncertainty estimates, confidence intervals, etc. that are close to those that would have been obtained from complete data, so there is little need to impute missing values to "re-estimate" these quantities. Second, since these "re-estimates" are obtained from datasets with imputed values, one is at increased risk of obtaining biased estimates for the parameters and their corresponding uncertainties. By imputing values, one is introducing additional noise in their analysis of the parameters, which reduces the quality of the estimates. Indeed, the risk of Multiple Imputation leading to bias with respect to certain parameter estimates has been observed in other settings \citep{madley-dowd_proportion_2019, mishra_comparative_2014, jochen_multiple_2013, wu_new_2013, black_missing_2011, knol_unpredictable_2010, leite_performance_2010, schafer_analysis_1997}. One justification for the imputation of values is to account for estimation uncertainty \citep{honaker_amelia_2011}, but there is no need to do this, because Theorems \ref{thm:thm1} and \ref{thm:thm2} show that the uncertainty estimate that one obtains from a large enough partial dataset will already be close to the true uncertainty. Multiple imputation is superfluous in large samples and brings inferences farther from their true values. In the next section, this paper concretely shows that because of this resulting bias in Multiple Imputation, Maximum Likelihood inference will tend to outperform Multiple Imputation in large samples with respect to obtaining parameter estimates and corresponding uncertainties that are close to the truth. 

\section{Inference with Missing Data: Simulation Example}

Following previous studies, this paper now shows by way of simulation how sample size and the extent of missingness can affect the validity of inference from partial data \citep{madley-dowd_proportion_2019, mishra_comparative_2014, jochen_multiple_2013, wu_new_2013, black_missing_2011, knol_unpredictable_2010, leite_performance_2010, schafer_analysis_1997}. This simulation is situated in the theoretical context of a study by \cite{lupu_best_2013} evaluating the impact of International Human Rights Law on public support for human rights abuses. It shows (1) that the maximum distance between the average-loglikelihood function (ignoring the missingness mechanism) generated by incomplete data and that which would have been generated by complete data decreases with respect to sample size and missingness, and (2) that for data that is Missing at Random, this approximation depends only on sample size. It further shows this convergence of the average-loglikelihood functions results in parameter estimates and corresponding uncertainties that are close to their true values (what they would have been without missingness), and much closer than those that are obtained with Multiple Imputation. Altogether, these results show that the theorems proven above do hold in practice. 

The data used in this part of the paper are purely simulated. However, to make better sense of the effects of MAR and Missing not at Random (MNAR) data on statistical inference, the simulated data are treated as if they came from the empirical survey study done by \cite{lupu_best_2013}. It is easier to make sense of MAR and MNAR data in this concrete way, because whether data end up being MAR or MNAR depends on the substance of what is being measured. For example, survey data on depression can be MAR if male participants are less likely to answer questions than female participants because of an aversion to showing vulnerability. In this example, the data are MAR because of some substantive reason about the differences in how males and females relate to mental health. It is therefore helpful to situate the simulated data in some empirical context like the \cite{lupu_best_2013} study so that the MAR and MNAR mechanisms are better understood. This is precisely the approach taken in other studies on missing data: \cite{mishra_comparative_2014} simulated data from a diabetic clinical trial, \cite{jochen_multiple_2013} simulated data from an existing dataset on parent-child relationships, \cite{wu_new_2013} simulated data from an existing dataset on dietary habits, and \cite{black_missing_2011} simulated data from an existing dataset on socioeconomic status and student achievement (other studies that do this include \cite{jakobsen_when_2017}, \cite{sung_monte_2007}, \cite{tang_efficient_2012}). Situating data in some empirical example does not alter the simulated nature of the data in any way, it is merely a means of framing it. Thus, the overall simulation results do not depend on the empirical example used. 

\subsection{Theoretical Context of the Simulation}
The \cite{lupu_violence_2019} study analyzes whether international law affects public support for human rights abuses. Through a survey experiment across 3 countries, \cite{lupu_best_2013} estimates the average treatment effect of whether an individual would support their government if told that their government has violated international human rights law. The study found that in India, an individual would tend to support their government less upon hearing this information, whereas in Israel, an individual would tend to support their government more. 

For simplicity, the dataset from this study as it is used in this paper contains only on the India subsample. It includes information about the following for each individual: whether they were told that their country has violated international human rights law, gender, age, ideology, education level, whether they participated in political activism, their level of political interest, and their level of approval for their government. 

\subsection{Methodology}
The main objects of interest in this paper are the average-loglikelihood functions (ignoring the missingness mechanism) generated by partial and complete datasets. To study these objects, this paper uses simulated normally distributed data. The main advantage of focusing on normally distributed data is that it allows for the convenient use of the Norm software \citep{schafer_norm_2013} in R to evaluate likelihoods of partial observations. This is useful because obtaining likelihoods from partial data can be computationally complex \citep{little_statistical_2019, sung_monte_2007, honaker_amelia_2011}. Although this paper focuses on likelihood functions that are generated by normally distributed data, the Theorems above hold for any kind of data with a continuous probability distribution so long as the likelihood function corresponds to that distribution. 

To simulate normally distributed data from the International Human Rights Law dataset, both the mean vector and variance-covariance matrix of the original dataset are calculated. These parameters are then used to simulate $500$ observations according to a multivariate normal model, $\mathcal{N}(\theta)= \mathcal{N}(\mu, \Sigma)$, that are treated as a perfectly representative sample of the population, from which simulated samples of the population can be drawn. Simulated samples from size $N=30$ to $N = 110$ are taken in increments of 1. For each sample, a distribution of $(\mu, \Sigma)$, $K$, is obtained by bootstrapping: taking $100$ "subsamples" of size $N$ of each $N$-sized sample, and calculating $(\mu,\Sigma)$ for each subsample. For each simulated sample, Missing at Random (MAR) data and Missing not at Random (MNAR) data are introduced in varying degrees. To introduce the former, it removes observations under the assumption that people with less interest in politics are less likely to answer the question about whether they approve of their government. To introduce the latter, it removes observations under the assumption that people who generally approve of the government tend not to indicate this on their surveys. In each case, missingness is introduced according to a Bernoulli distribution. The paper looks at what happens when $20\%, 30\%$, and $40\%$ of a specific variable (the subject's approval of their government) are MAR/MNAR. Then, the average-loglikelihood functions generated by the complete and incomplete datasets are approximated, using the Norm package to evaluate the average-loglikelihoods of partial data. These functions are evaluated over the space of parameters generated by bootstrapping, $K$, and the supremum (which is a maximum given that all of this is done on a compact set) of the differences between the two functions over $K$ is evaluated. 

The sample sizes considered for this part of the empirical analysis, namely the analysis of the average-loglikelihood functions, are relatively small; however, this is done intentionally. The average-loglikelihood functions considered here can yield values that are too small for R to return as non-zero numbers. One could work with the loglikelihoods instead, but this can also be difficult because one would have to take the log of the difference of two average-loglikelihood functions, and the average-loglikelihood functions will still replace some small values with zero, leading to a difference of zero and thus producing $-\infty$. The reason one would have to take the log of the difference of the functions rather than the difference of log of the two functions is that as sample size increases, the average-loglikelihoods tend to get really small; and the smaller they are, the greater the effect of a slight shift in the input (this is because the derivative of the log is $\frac{1}{x})$. Thus, differences between the two functions will tend to be exaggerated as the sample size increases, making the effects of sample size and the missingness mechanism harder to detect. To ensure that effects are correctly and precisely measured, the sample sizes are kept relatively small. This does not prove to be a big issue, since the effects of sample size and the extent of missingness are still clear with these small sample sizes. 

Afterward, this paper illustrates the effect that this convergence of the average-loglikelihood functions has on the Maximum Likelihood Estimates (MLEs) and their corresponding uncertainties. Using the likelihood functions generated by partial data, this paper obtains the MLEs and corresponding uncertainties of the parameter, $(\mu, \Sigma)$, and compares them to the estimates and corresponding uncertainties of these parameters that are obtained through Multiple Imputation by way of the Amelia software in R \citep{honaker_amelia_2011}. This is done by leveraging Amelia's Bootstrap to obtain a distribution of estimates for the parameters $(\mu,\Sigma)$ whose size is equal to the number of imputations, which is set to $m=200$. This comparison is done over samples ranging from size $N=200$ to $N = 5000$ under different Missing at Random mechanisms. The Missing at Random mechanisms are simulated in the same way as above for $\psi = 0.1, 0.3,$ and $0.7$.  

\subsection{Results}
Figure \ref{fig:supmar} illustrates how the supremum of the differences between the average-loglikelihood functions (ignoring the missingness mechanism) generated by partial and complete data changes with respect to sample size and the extent of Missing at Random data. Figure \ref{fig:supmnar} is analogous for Missing not at Random data. Figure \ref{fig:lkversusmi.mle} shows how the parameter estimates generated by the likelihood function approximate the true parameter as sample size increases under the Missing at Random assumption, and compares this approximation to that obtained by Multiple Imputation. Figure \ref{fig:lkversusmi.uncertainty} shows this same phenomenon with respect to the uncertainty of the parameter estimates.  

\begin{figure}[hbt!]
\includegraphics[scale = 0.25]{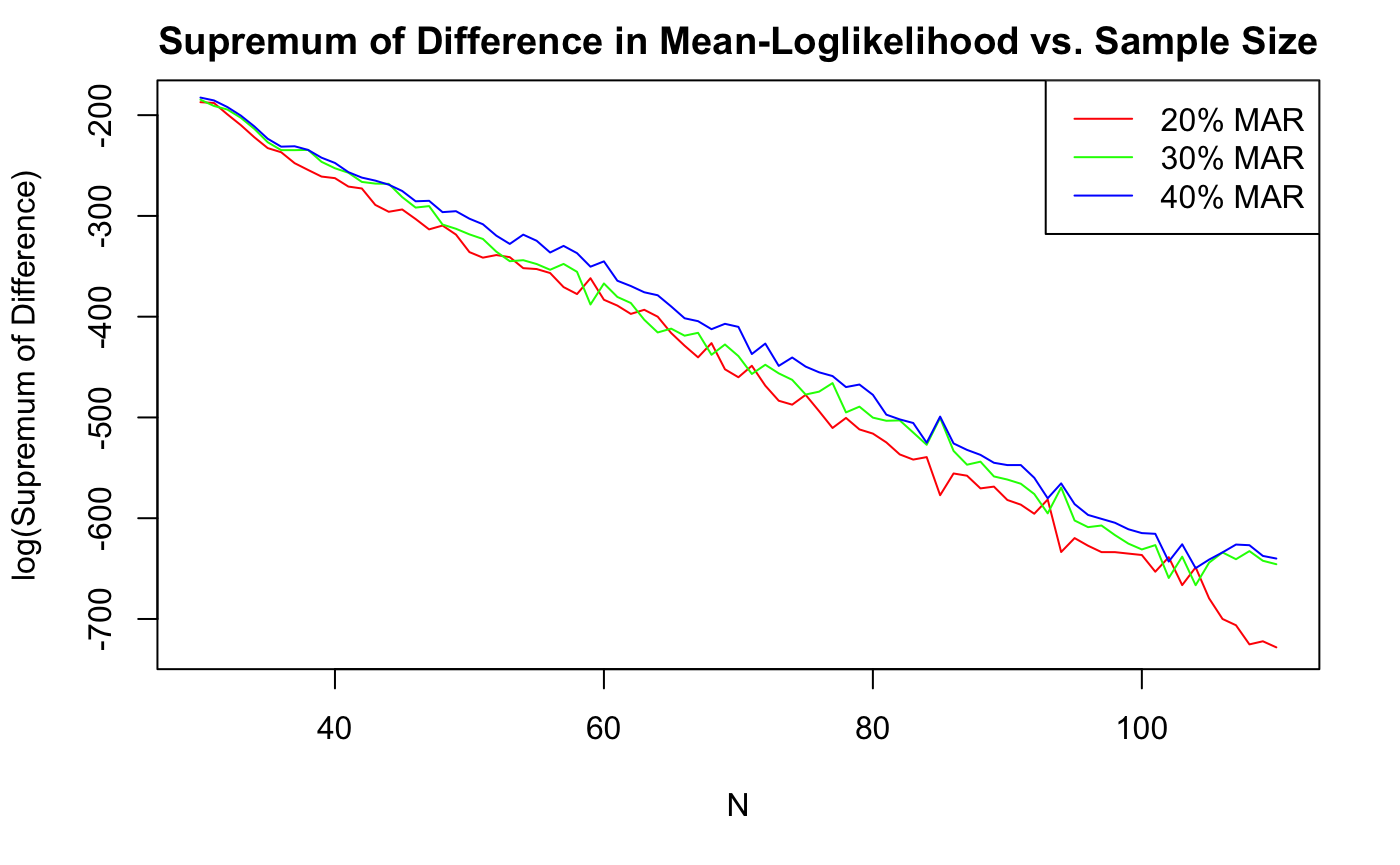}
\centering
\caption{Supremum of Difference in Average-Loglikelihood (log) vs. Sample Size when data are MAR}
\label{fig:supmar}
\end{figure}

\begin{figure}[hbt!]
\includegraphics[scale = 0.25]{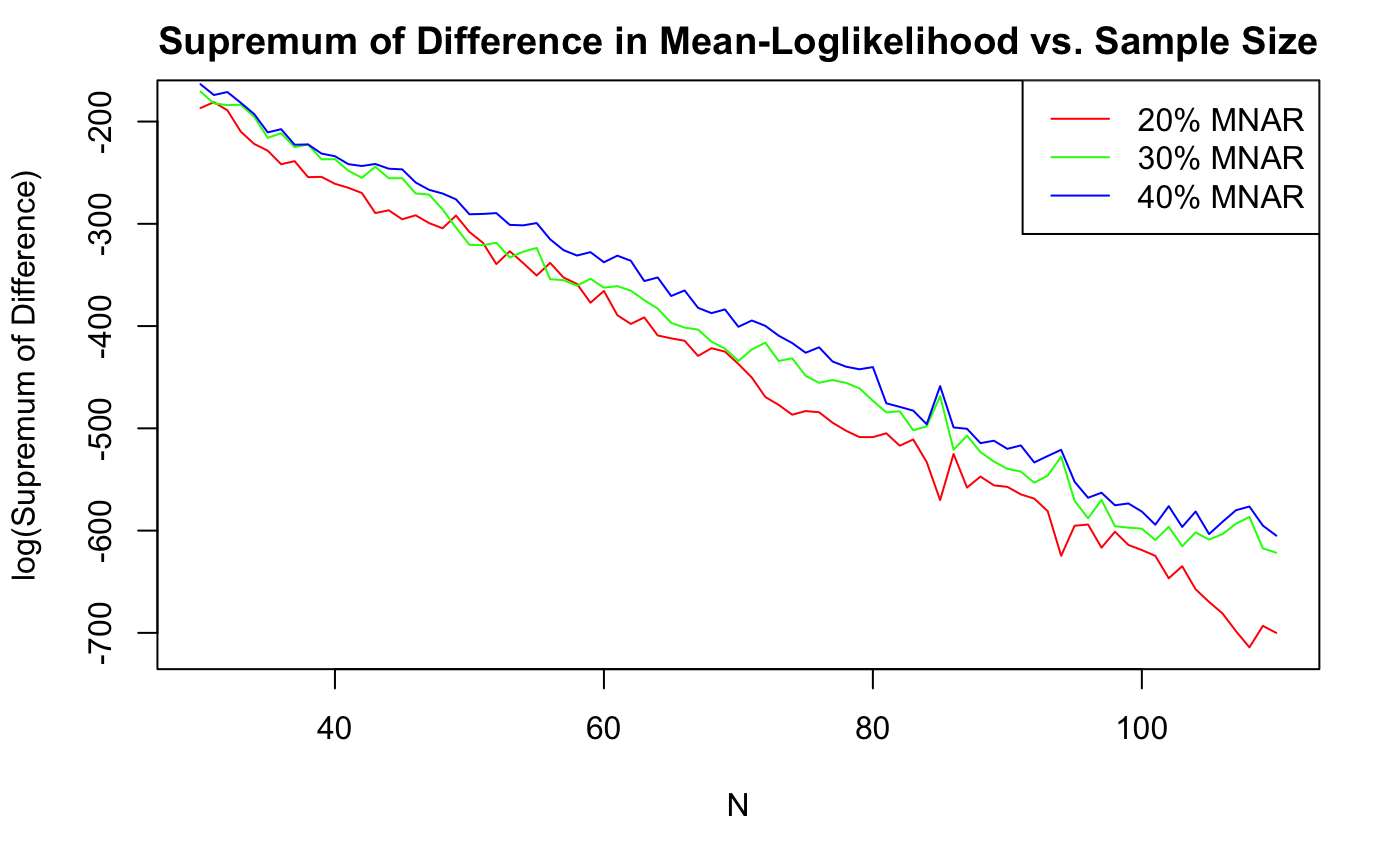}
\centering
\caption{Supremum of Difference in Average-Loglikelihood (log) vs. Sample Size when data are MNAR}
\label{fig:supmnar}
\end{figure}

\begin{figure}[hbt!]
\includegraphics[scale = 0.5]{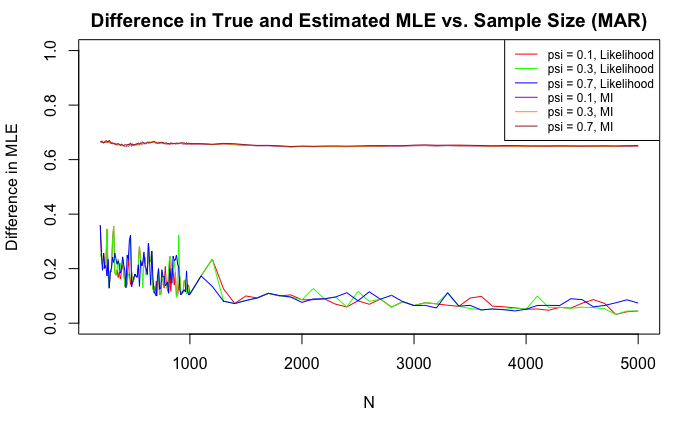}
\centering
\caption{Difference in MLE vs. Sample Size under Likelihood and MI}
\label{fig:lkversusmi.mle}
\end{figure}

\begin{figure}[hbt!]
\includegraphics[scale = 0.5]{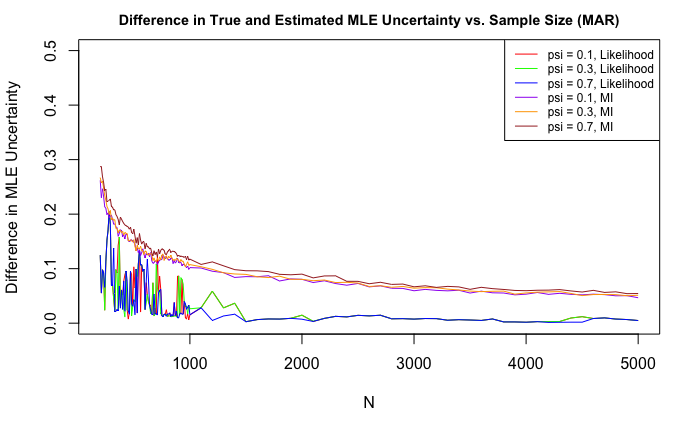}
\centering
\caption{Difference in MLE Uncertainty vs. Sample Size under Likelihood and MI}
\label{fig:lkversusmi.uncertainty}
\end{figure}

\subsection{Discussion}
Figures \ref{fig:supmar} and \ref{fig:supmnar} show that as the sample size increases, the maximum distance between the average-loglikelihood functions (ignoring the missingness mechanism) generated by partial and complete data tends to decrease. They also show that as this maximum distance tends to decreases as the extent of missingness decreases; however, this effect appears to be more pronounced in Missing not at Random data than in Missing at Random data. This is expected, since Theorem \ref{thm:thm2} proves that for Missing at Random data, only the sample size matters for convergence. It may also be interesting to note that the value of the supremum of the differences between average-loglikelihood functions generated by partial and complete data tends to be the same given similar sample sizes and degrees of missingness. Overall, these results verify that as the sample size increases and the extent of missingness decreases, a average-loglikelihood function generated by incomplete data will approximate that which would have been generated by complete data. 

Figures \ref{fig:lkversusmi.mle} and \ref{fig:lkversusmi.uncertainty} show further that the inferences drawn from partial datasets under the Missing at Random assumption will tend to approximate the truth as sample size increases. The difference between the MLE generated by Maximum Likelihood on partial datasets and the true MLE decreases as sample size increases. The same is true for the MLE uncertainty. Moreover, in both cases these differences are smaller than the differences that would result from using Multiple Imputation instead of Maximum Likelihood under different sample size and missingness specifications. This suggests that not only is Likelihood good at approximating the true parameters and corresponding uncertainties, it also does a better job than Multiple Imputation

\section{Conclusion}
Missing data are often an inevitable and difficult part of scientific research. Researchers have tried to deal with this problem by doing inference on partial datasets. However, it is not always clear when inferences done on partial datasets will be asymptotically valid. This paper fills in this knowledge gap by offering a new asymptotic theory about inference with missing data. It proves that as sample size increases and the extent of missingness decreases, the average-loglikelihood function (ignoring the missingness mechanism) generated by a partial dataset will approximate that generated by a complete dataset. Moreover, it shows that if the data are Missing at Random, this approximation depends only on sample size. This result adds to previous research by proving that \textit{all} possible inferences that one can draw from a loglikelihood function generated by partial data will approximate their true values (their complete-data analogues), whereas previous research has only proved this to be true for the posterior modes \citep{little_statistical_2019, little_conditions_2017, seaman_what_2013, takai_asymptotic_2013, nielsen_inference_1997}. This result has important practical implications: (1) the inferences obtained from partial datasets will approximate those that would have been obtained from complete ones for sufficiently large samples and small amounts of missing data, (2) when the data are Missing at Random, only the sample size matters for the validity of these inferences, not the proportion of missing data, (3) if the data are Missing at Random, then the missingness mechanism will have no asymptotic effect on parameter estimation and hypothesis testing, and (4) Maximum Likelihood inference is preferable to Multiple Imputation in large samples. 
\section{Appendix}

\textit{Proof of Lemma \ref{lemma:l1}}

\textbf{Lemma \ref{lemma:l1}:} \textit{$\lim_{k \to \infty} \mathbb{E}_{M \in \{0,1\}^m} \bigg[ f(O(X, M)| \theta) \big| X, \psi_k \bigg] = f(X| \theta)$ almost uniformly on $K_X \times K_\theta$.}

\begin{proof}
Let $\theta \in K_\theta$, $X \in K_X$. 

\begin{align*}
      \lim_{k \to \infty}  \mathbb{E}_{M \in \{0,1\}^m} \bigg[ f(O(X, M)| \theta) \big| X, \psi_k \bigg] \nonumber
    \\
    = \lim_{k \to \infty} \int_{\{0,1\}^m} f(O(X,M) | \theta) dP_k  \nonumber
\end{align*}

Observe that $f(O(X,M) | \theta)$ is bounded and Lipschitz over $\{0,1\}^m$. Observe also that $\forall M \in \{0,1\}^m, \limsup_{k \to \infty}P_k(M | X,\psi_k)  \leq \mathbb{1}_{\{\{1\}^m\}}(M)$ by construction of $\{P_k\}_{k=1}^\infty$. Therefore, by the Portmanteau Theorem,

\begin{align}
    \lim_{k \to \infty} \int_{\{0,1\}^m} f(O(X,M) | \theta) dP_k  \nonumber
    \\
    = \int_{\{0,1\}^m} f(O(X,M) | \theta) d\mathbb{1}_{\{\{1\}^m\}}
    \\
    = f(X| \theta) \nonumber
    \\
    \implies \lim_{k \to \infty} \mathbb{E}_{M \in \{0,1\}^m} \bigg[ f(O(X, M)| \theta) \big| X, \psi_k \bigg] = f(X| \theta) \ \text{for a given $X, \theta$} \nonumber
\end{align}

Since $K_X \times K_\theta$ is compact, it is of finite measure. Therefore, by Egorov's Theorem, this convergence happens almost uniformly on $K_X \times K_\theta$: 

\begin{align*}
    \mathbb{E}_{M \in \{0,1\}^m} \bigg[ f(O(X, M)| \theta) \big| X, \psi_k \bigg] \ \text{converges almost uniformly on $K_X \times K_\theta$ to } \ f(X| \theta)
\end{align*}
\end{proof}

\textit{Proof of Lemma 2.}

\textbf{Lemma \ref{lemma:l2}:} \textit{There exists a sequence of simple functions $\{\phi_n\}_{n=1}^{\infty}$ of the form $\sum_{j=1}^{n}z_j\mathbb{1}_{E_j}$ such that $\lim_{n \to \infty} \phi_n = X$ uniformly. Moreover,}

\begin{enumerate}
    \item $f(O(X , M)| \theta)  = f(O(\phi_n , M)| \theta)  + \varepsilon(n)$ uniformly over $K_X \times K_\theta$ for all $ M \in \{0,1\}^m$, \text{and} 
    
    \item $f(X | \theta) = f(\phi_n   | \theta) + \varepsilon(n)$ uniformly over $K_X \times K_\theta$.
\end{enumerate} 

\begin{proof}
Let $M \in \{0,1\}^m$. $f(O(X , M)| \theta) $ is continuous on $K_X \times K_\theta$, which is a compact set. So, $f(O(X , M)| \theta) $ is uniformly continuous on $K_X \times K_\theta$. This means that $\forall \varepsilon > 0, \exists \delta(M, \varepsilon) > 0 : \forall X_1, X_2 \in K_X : || X_1 - X_2|| < \delta(M, \varepsilon),$ 

\begin{align}
    |f(O(X_1, M)| \theta)  - f(O(X_2, M)| \theta) | < \varepsilon  \nonumber
\end{align}

By Theorem 2.10 in \cite{folland_real_2007}, $\exists \{\phi_n\}_{n = 1}^{\infty}$, a sequence of simple functions of the form $\sum_{j =1 } ^{n} z_j \mathbb{1}_{E_j}$ such that $\lim_{n \to \infty} \phi _n = X$ uniformly. This implies that $\forall \tilde{\varepsilon} >0, \exists \tilde{N} \in \mathbb{N} : \forall n \geq \tilde{N}, \sup_{F \in X^{-1}(K_X)} |X(F) - \phi_n(F)| < \tilde{\varepsilon}$. Here $X^{-1}(K_X)$ can be thought of as the space of outcomes that, when input as arguments into $X$, return values that span $K_X$.

So, if $\tilde{\varepsilon} = \delta(M, \varepsilon), \exists N_1 \in \mathbb{N} : \forall n \geq N_1,$
\begin{align}
\label{eqn:conv1}
\sup_{F \in X^{-1}(K_X)}|X(F) - \phi_n(F)| < \delta(M,\varepsilon)
\\
\label{eqn:conv2}
\implies 
    |f(O(X,M) | \theta) - f(O(\phi_n,M) | \theta) | < \varepsilon
    \\
   \implies  f(O(X,M) | \theta) = f(O(\phi_n,M) | \theta) + \varepsilon(n)  \nonumber
\end{align}

uniformly over $K_X \times K_\theta$ for the given $M \in \{0,1\}^m$. Since $M$ can only adopt finitely many values in $\{0,1\}^m$, $\forall \varepsilon >0$ $\exists \delta = \min_{M \in \{0,1\}^m} \{\delta(M,\varepsilon) \}$, and hence a corresponding $N_1^*$ analogous to $N_1$ in Equations \ref{eqn:conv1} and \ref{eqn:conv2} such that: 

\begin{align}
\label{eqn:simp1}
    f(O(X,M) | \theta) = f(O(\phi_n,M)  | \theta) + \varepsilon(n) 
\end{align}

uniformly over $K_X \times K_\theta$ for all $M\in \{0,1\}^m$. Letting $M = \{1\}^m$ shows that: 

\begin{align}
\label{eqn:simp2}
    f(X| \theta) = f(\phi_n  | \theta) + \varepsilon(n)
\end{align}

uniformly over $K_X \times K_\theta$.
\end{proof}

\textit{Proof of Theorem 1.}

\textbf{Theorem \ref{thm:thm1}:} \begin{align*}
    \lim_{N \to \infty} \frac{1}{N}\sum_{i=1}^N f(O(X(F_i), M_i) | \theta) 
    \\
    = \lim_{N \to \infty} \frac{1}{N}\sum_{i=1}^N f(X(F_i) | \theta) + \frac{1}{N}\sum_{i=1}^N \varepsilon_i(k) + \frac{2}{N}\sum_{i=1}^N\varepsilon_i(n)
\end{align*}

\textit{in probability on $K_X \times K_\theta$, where $\{\psi_k\}_{k \in \mathbb{N}}$ is a sequence of parameters generating the $\{M_i\}_{i \in \mathbb{N}}$, and $\{\phi_n\}_{n=1}^{\infty}$ is a sequence of simple functions such that $\lim_{n \to \infty} \phi _n = X$ uniformly over $K_\theta$.}

\begin{proof}

Begin by noting that, uniformly over $K_\theta$,

\begin{align}
    \sum_{i=1}^N f(O(X(F_i), M_i) | \theta)  \nonumber
    \\
    = \sum_{i=1}^N \bigg[f(O(\phi_n(F_i), M_i) | \theta) + \varepsilon_i(n) \bigg] \ \text{by Lemma \ref{lemma:l2}} \nonumber
\end{align}

Now, note that given a sequence of simple functions, $\{\phi_n\}_{n=1}^{\infty}$ of the form $\sum_{j =1}^n z_j \mathbb{1}_{E_j}$, such that $\lim_{n \to \infty} \phi _n = X$ uniformly,

\begin{align}
\label{eqn:simpfun}
    f(O(\phi_n, M) | \theta) 
    = \sum_{j=1}^n f(O(z_j, M)  | \theta) \mathbb{1}_{E_j}
\end{align}
for all $M \in \{0,1\}^m, \theta \in K_\theta$. 

Thus, uniformly over $K_\theta$,

\begin{align}
    \sum_{i=1}^N \bigg[f(O(\phi_n(F_i), M_i) | \theta) + \varepsilon_i(n) \bigg] \nonumber
    \\
    =\sum_{i=1}^N \sum_{j =1}^n \bigg[ f(O(z_j, M_i) | \theta)\mathbb{1}_{E_j}(F_i) + \varepsilon_{i}(n)\mathbb{1}_{E_j}(F_i) \bigg] \ \text{by Equation \ref{eqn:simpfun}} \nonumber
\end{align}

Now, note that the $\{E_j\}_{j=1}^n$ are mutually exclusive and collectively exhaustive of $X^{-1}(K_X)$ by properties of simple functions, so $\forall i \in \{1,...,N\}$, $\exists ! j(i) \in \{1,...,n\} : F_i \in E_{j(i)}$. This implies that $\forall j \in \{1,..., n\}, \exists ! I(j) \subseteq \{1,...,N\} : \forall i \in I(j), F_i \in E_{j}$, and $I(j_1) \cap I(j_2) \neq \emptyset$ if and only if $j_1 = j_2$. This is a formal way of saying that every outcome in $\{F_i\}_{i =1}^{N}$ is associated with some $z_j$ in the construction of $\phi_n$, and vice versa. Thus, uniformly over $K_\theta$,

\begin{align}
    \sum_{i=1}^N \sum_{j =1}^n \bigg[ f(O(z_j, M_i) | \theta)\mathbb{1}_{E_j}(F_i) + \varepsilon_{i}(n)\mathbb{1}_{E_j}(F_i) \bigg] \nonumber
    \\
    = \sum_{j=1}^n \sum_{l \in I(j)} \bigg[ f(O(z_j, M_l) | \theta) + \varepsilon_{l}(n) \bigg]  \nonumber
\end{align}

By the Strong Law of Large Numbers and Lemma \ref{lemma:l1}, for any $j \in \{1,...,n\}$,

\begin{align}
    \frac{1}{|I(j)|} \sum_{l \in I(j)} f(O(z_j, M_l)| \theta)
    = \mathbb{E}_{M \in \{0,1\}^m} \bigg[ f(O(\phi_n, M) | \theta) \bigg| z_j, \psi_k \bigg] + \varepsilon_j(|I(j)|, \theta)  \nonumber
    \\ = f(z_j | \theta) + \varepsilon_j(k) + \varepsilon_j( |I(j)|) \nonumber
\end{align}

almost uniformly over $K_X \times K_\theta$. 

Further, note that $\forall j \in \{1,...,n\}, \varepsilon >0, \exists N_*(j) \in \mathbb{N} : \forall N \geq N_*(j), |\varepsilon_j(|I(j)|)| < \varepsilon $, because $P(F_i) > 0 \ \forall i \in \{1,..., N\}$. So, $\forall \varepsilon > 0, \exists N_* = \max_{j \in \{1,...,n\}} N_*(j) : \forall N \geq N_*, |\varepsilon_j(|I(j)|)| < \varepsilon.$ This is a formal way of saying that for any $j \in \{1,...,n\}$, the corresponding set of outcomes $I(j)$ that, when input as arguments into the random variable $X$, would produce $z_j$, depends on $N$. This is because each outcome has nonzero probability, so increasing $N$ (the sample size), and then drawing random samples will lead to a larger $|I(j)|$ for all $j$. This means that the error term, $\varepsilon_j(|I(j)|)$ becomes arbitrarily small as the sample size, $N$, increases. Formally, 

\begin{align}
\label{eqn:ij}
    \frac{1}{|I(j)|} \sum_{l \in I(j)} f(O(z_j, M_l) | \theta)
    = f(z_j | \theta) + \varepsilon_j(k) + \varepsilon_j(N)
\end{align}

almost uniformly on $K_X \times K_\theta$. 

Thus, the following convergence happens in probability on $K_X \times K_\theta$:

\begin{align}
    = \sum_{j=1}^n \sum_{l \in I(j)} \bigg[ f(O(z_j, M_l) | \theta) + \varepsilon_{l}(n) \bigg]  \nonumber
    \\
    = \sum_{j=1}^n |I(j)| \bigg[ f(z_j | \theta) + \varepsilon_j(k) + \varepsilon_j(N) \bigg] + \sum_{i=1}^N\varepsilon_i(n) \ \text{by Equation \ref{eqn:ij}} \nonumber
    \\
    = \sum_{j=1}^n \bigg[|I(j)| f(z_j|\theta) + |I(j)|\varepsilon_j(k) + |I(j)|\varepsilon_j(N) \bigg]  + \sum_{i=1}^N\varepsilon_i(n) \nonumber
    \\
    = \sum_{i=1}^N \sum_{j=1}^n f(z_j| \theta)\mathbb{1}_{E_j}(F_i) + \sum_{i=1}^N \sum_{j=1}^n \varepsilon_j(k) \mathbb{1}_{E_j}(F_i) + \sum_{i=1}^N \sum_{j=1}^n \varepsilon_j(N) \mathbb{1}_{E_j}(F_i) + \sum_{i=1}^N\varepsilon_i(n) \nonumber
    \\
    = \sum_{i=1}^N f(\phi_n(F_i) | \theta) +  \sum_{i =1}^N\varepsilon_i(k) + \sum_{i =1}^N \varepsilon_i(N) + \sum_{i=1}^N\varepsilon_i(n)  \nonumber
    \\
    \label{eqn:result}
    =\sum_{i=1}^N \bigg[ f(X(F_i) | \theta) + \varepsilon_i(n) \bigg] +  \sum_{i =1}^N\varepsilon_i(k) + \sum_{i =1}^N \varepsilon_i(N) + \sum_{i=1}^N\varepsilon_i(n)  \ \text{by Lemma \ref{lemma:l2}}  \nonumber
    \\
    =\sum_{i=1}^N f(X(F_i) | \theta) + \sum_{i =1}^N\varepsilon_i(k) + \sum_{i =1}^N \varepsilon_i(N) + 2\sum_{i=1}^N\varepsilon_i(n) \nonumber
    \end{align}

This implies that:
\begin{align}
    \lim_{k \to \infty, N \to \infty} \frac{1}{N}\sum_{i=1}^N f(O(X(F_i), M_i) | \theta)  \nonumber
    \\
    = \frac{1}{N} \bigg[\sum_{i=1}^N f(X(F_i) | \theta) +  \sum_{i =1}^N\varepsilon_i(k) + \sum_{i =1}^N \varepsilon_i(N)  + 2\sum_{i=1}^N\varepsilon_i(n)  \bigg] \nonumber
    \\
    = \frac{1}{N}\sum_{i=1}^N f(X(F_i) | \theta) + \frac{1}{N}\sum_{i =1}^N\varepsilon_i(k) + \frac{1}{N}\sum_{i =1}^N \varepsilon_i(N)   + \frac{2}{N}\sum_{i=1}^N\varepsilon_i(n)  \nonumber
\end{align}

in probability on $K_X \times K_\theta$, where $ \lim_{k\to \infty} \sup_{i \in \mathbb{N}}\varepsilon_i(k) = \lim_{N \to \infty} \sup_{i \in \mathbb{N}} \varepsilon_i(N)  =0$. Thus, 

\begin{align*}
    = \lim_{N \to \infty} \frac{1}{N}\sum_{i=1}^N f(X(F_i) | \theta) + \frac{1}{N}\sum_{i=1}^N \varepsilon_i(k) + \frac{2}{N}\sum_{i=1}^N\varepsilon_i(n)
\end{align*}

in probability on $K_X \times K_\theta$, where $\{\psi_k\}_{k \in \mathbb{N}}$ is a sequence of parameters generating the $\{M_i\}_{i \in \mathbb{N}}$, and $\{\phi_n\}_{n=1}^{\infty}$ is a sequence of simple functions such that $\lim_{n \to \infty} \phi _n = X$ uniformly over $K_\theta$.

\end{proof}

\textit{Proof of Theorem 2.}

\textbf{Theorem \ref{thm:thm2}:} \textit{If the data are Missing at Random, then }

\begin{align*}
    \lim_{N \to \infty} \frac{1}{N}\sum_{i=1}^N f(O(X(F_i), M_i) | \theta) = \lim_{N \to \infty} \frac{1}{N}\sum_{i=1}^N f(X(F_i) | \theta) + \frac{2}{N}\sum_{i=1}^N\varepsilon_i(n)
\end{align*}

\textit{in probability on $K_X \times K_\theta$ for any parameter $\psi \in K_\psi$ that describes the missingness mechanism from which the $\{M_i\}_{i \in \mathbb{N}}$ are generated, and where $\{\phi_n\}_{n=1}^{\infty}$ is a sequence of simple functions such that $\lim_{n \to \infty} \phi _n = X$ uniformly over $K_\theta$.}

\begin{proof}
Let $\phi_n$ be a simple function approximation of $X$ as discussed in Theorem \ref{thm:thm1}. It is sufficient to show that:

$$\mathbb{E}_{K_{X}} \bigg[\varepsilon(k)\bigg] = 0$$

since as $N$ approaches infinity, $\frac{1}{N} \sum_{i=1}^{N} \varepsilon_i(k) \approx \mathbb{E}_{K_{X}} \bigg[\varepsilon_i(k)\bigg]$ by the Strong Law of Large Numbers. Note that

\begin{align}
    \mathbb{E}_{K_{X}} \bigg[ \varepsilon_i(k) \bigg]  \nonumber
    \\
    = \mathbb{E}_{K_{X}} \bigg[ \mathbb{E}_{M \in \{0,1\}^m} \bigg[f(O(\phi_n , M)| \theta)  \bigg] - f(\phi_n | \theta) \bigg]  \nonumber
\end{align}

Thus, it is sufficient to show that 

$$\mathbb{E}_{K_{X}} \bigg[ \mathbb{E}_{M \in \{0,1\}^m} \bigg[f(O(\phi_n , M)| \theta)  \bigg] \bigg] = \mathbb{E}_{K_{X}} \bigg[ f(\phi_n | \theta) \bigg]$$

Observe that 

\begin{align}
    \mathbb{E}_{K_{X}} \bigg[ \mathbb{E}_{M \in \{0,1\}^m} \bigg[f(O(\phi_n , M)| \theta)  \bigg] \bigg]  \nonumber
    \\
    = \sum_{j =1}^{n} \sum_{M \in \{0,1\}^m}  f(O(z_j,M) P_k(M|O(z_j, M), \psi_k) P(E_j) \nonumber
    \\
    = \sum_{M \in \{0,1\}^m} \sum_{j =1}^{n} f(O(z_j,M) P_k(M|O(z_j, M), \psi_k) P(E_j) \nonumber
\end{align}

Because the data are Missing at Random, note that $\forall M \in \{0,1\}^m$, there is a set of subsets of $\{1,...,n\}$, denoted $C(M) = \{C(M)_1,...,C(M)_{n(M)}\}$ that are mutually exclusive and collectively exhaustive of $\{1,...,n\}$ such that $O(z_r, M) = O(z_s, M)$ if and only if $\exists! q \in \{1,...,n(M)\} : r,s \in C(M)_q$. This is a formal way of saying that for any given missingness vector, one can partition the space of $\{E_1,...,E_n\}$ into clusters of $E_j$'s that would have the same observed values under the missingness vector given. For any $M \in \{0,1\}^m, q \in \{1,...,n(M)\},$ let the "representative" of $C(M)_q$ be $z(M,q) = O(z_j, M)$, where $j \in C(M)_q$.

Thus, 

\begin{align*}
    \sum_{M \in \{0,1\}^m} \sum_{j =1}^{n} f(O(z_j,M) P_k(M|O(z_j, M), \psi_k) P(E_j) 
    \\
    = \sum_{M \in \{0,1\}^m} \bigg[ \sum_{q = 1}^{n(M)} \bigg[ \sum_{j \in C(M)_q} \bigg[  \ldots \\ \ldots f(O(z_j,M) | \theta) P_k(M | O(z_j,M), \psi_k) P(E_j) \bigg] \bigg] \bigg] 
    \\
    = \sum_{M \in \{0,1\}^m} \bigg[ \sum_{q = 1}^{n(M)} \bigg[ \sum_{j \in C(M)_q} \bigg[  \ldots \\ \ldots \bigg[  \int_{\{E_j : j \in C(M)_q \}} f(z_j|\theta) \frac{dP(E_j)}{P(\{E_j : j \in C(M)_q \})} \bigg] P_k(M | O(z_j,M), \psi_k) P(E_j) \bigg] \bigg] \bigg] 
    \end{align*}
    since $f(O(X,M))$ is obtained by "integrating out" the missing values by taking a conditional expectation. 
    \begin{align*}
    = \sum_{M \in \{0,1\}^m} \bigg[ \sum_{q = 1}^{n(M)} \bigg[ \sum_{j \in C(M)_q} \bigg[  \ldots \\ \ldots \mathbb{E}_{K_{X}} \bigg[ f(\phi_n|\theta) \bigg| O(\phi_n, M) = z(M,q) \bigg] P_k(M | O(z_j,M), \psi_k) P(E_j) \bigg] \bigg] \bigg] 
    \\
     = \sum_{M \in \{0,1\}^m} \bigg[ \sum_{q = 1}^{n(M)} \bigg[ \sum_{j \in C(M)_q} \bigg[ \ldots \\ \ldots \mathbb{E}_{K_{X}} \bigg[ f(\phi_n|\theta) \bigg| j \in C(M)_q,M \bigg] P_k(M | O(z_j,M), \psi_k) P(E_j) \bigg] \bigg] \bigg] 
    \\
     = \sum_{M \in \{0,1\}^m} \bigg[ \sum_{q = 1}^{n(M)} \bigg[ \mathbb{E}_{K_{X}} \bigg[ \ldots \\ \ldots f(\phi_n|\theta) \bigg| j \in C(M)_q,M \bigg] P_k(M | O(z_j,M), \psi_k) \sum_{j \in C(M)_q } P(E_j) \bigg] \bigg] \nonumber
     \\
     \text{by the fact that the data are Missing at Random} \nonumber
     \\
     = \sum_{M \in \{0,1\}^m} \bigg[ \sum_{q = 1}^{n(M)} \bigg[ \mathbb{E}_{K_{X}} \bigg[ \ldots \\ \ldots f(\phi_n|\theta) \bigg| j \in C(M)_q,M \bigg] P_k(M |O(z_j,M), \psi_k) P(j \in C(M)_q | M) \bigg] \bigg] \nonumber
     \\
     = \sum_{M \in \{0,1\}^m} \bigg[ \sum_{q = 1}^{n(M)} \bigg[ \mathbb{E}_{K_{X}} \bigg[ \ldots \\ \ldots f(\phi_n|\theta) \bigg| j \in C(M)_q,M \bigg]P(j \in C(M)_q | M) P_k(M | O(z_j,M) , \psi_k)  \bigg] \bigg] \nonumber
     \\
     = \mathbb{E}_{K_{X}} \bigg[ f(\phi_n|\theta) \bigg] \nonumber
\end{align*}
\end{proof}

\section{Acknowledgements}

This project has not been funded. 

\bibliography{references}

\end{document}